\documentclass[reqno]{amsart}

\usepackage{amsmath}
\usepackage{amssymb}
\usepackage{amsfonts}
\usepackage{graphicx}
\usepackage{amsthm}
\usepackage{enumerate}
\usepackage[mathscr]{eucal}
\usepackage{lscape}
\usepackage{dsfont}
\usepackage{color}
\usepackage{mathtools}

\usepackage{setspace}
\onehalfspacing

\newtheorem{theor}{Theorem}[section]

\newcommand{\Hm}{\mathcal{H}_m}

\newcommand{\C}{\mathbb{C}}

\newcommand{\K}{K\"{a}hler\ }

\newcommand{\Ric}{\textrm{Ric}}

\begin{document}
\title[On the balanced condition for the Eguchi-Hanson metric]{On the balanced condition for the Eguchi-Hanson metric}

\author[Francesco Cannas Aghedu]{{\bfseries Francesco Cannas Aghedu}} 

\address{Dipartimento di Matematica e Informatica, Universit\`a di Cagliari\\Via Ospedale 72, 09124 Cagliari (Italy) }
\email{fcannasaghedu@unica.it}

\subjclass[2010]{53C55; 58C25;  58F06} 
\keywords{K\"ahler manifolds; projectively induced metrics; balanced metric; Ricci-flat metrics;  Eguchi-Hanson metric.}

\begin{abstract}
Let $g_{EH}$ be the Eguchi-Hanson metric on the blow-up of $\C^2$ at the origin. In this paper we show that $mg_{EH}$ is not balanced for any positive integer $m$.
\end{abstract}
 
\maketitle
\tableofcontents
\newpage
\section{Introduction}
The characterization of K\"ahler-Einstein projectively induced metrics is an important open problem in the field of \K geometry.

Here a \K metric $g$ on a complex manifold $M$ is said to be \textit{projectively induced} if there exists a holomorphic and isometric (i.e. K\"ahler) immersion of $(M,g)$ into the complex projective space $(\C P^N,g_{FS}), N \leq +\infty$, endowed with the Fubini-Study metric $g_{FS}$, the metric whose associated \K form is given in homogeneous coordinates by $\omega_{FS}= \frac{i}{2\pi}\partial\bar{\partial}\log(|Z_0|^2+ \cdots + |Z_N|^2)$. The reader is referred to the recent book \cite{libdiast} for an updated report  on \K immersions into complex projective spaces and, more generally,  complex space forms.

In particular, in a recent paper A. Loi, F. Salis and F. Zuddas  \cite{LSZ} study  projectively induced Ricci-flat metrics and  they prove that  the Eguchi-Hanson metric $g_{EH}$ on the blow up of $\C^2$ at the origin (see Section \ref{section2} below)  is not projectively induced. Moreover, in the same paper they conjecture that $mg_{EH}$ is not projectively induced for any positive integer $m$, and they give evidence of this fact for small values of  the integer $m$.

The aim of this paper is to provide the validity of the conjecture by restricting it to an interesting class of projectively induced metrics, namely  the balanced metrics in the sense of  Donaldson (see Section \ref{section3}  below for details).\\

Our main result  is then  the following:
\begin{theor}\label{Th1}
The metric $m g_{EH}$ is not balanced for any positive integer $m$. 
\end{theor}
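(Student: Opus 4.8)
The plan is to show that $mg_{EH}$ fails the balanced condition by computing the Kempf distortion function (the diastatic/epsilon function) $\varepsilon_{mg_{EH}}$ associated to the metric and showing it cannot be constant. Recall that Donaldson's balanced condition for a metric $g=c_1(L,h)$ on a polarized manifold $(M,L)$ requires the existence of a holomorphic global frame for $H^0(M,L^{\otimes m})$ (or the relevant Hilbert space of $L^2$-holomorphic sections) with respect to which the function $\varepsilon_{mg}(x)=\sum_j |s_j(x)|_{h^m}^2$ is constant on $M$. Since the Eguchi-Hanson manifold is noncompact, I would work with the noncompact formulation: the $L^2$-norm defines a weighted Bergman space of holomorphic sections, and balancedness is equivalent to the constancy of the associated $\varepsilon$-function. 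So the first step is to set up an explicit orthonormal basis of the Bergman space of the line bundle whose curvature is $m\omega_{EH}$.

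Next I would exploit the large symmetry group of the Eguchi-Hanson metric. Because $g_{EH}$ is $U(2)$-invariant (equivalently $SU(2)\times U(1)$-invariant), the metric is determined by a single radial potential, and there is an explicit \K potential $\Phi(r)$ with $\omega_{EH}=\frac{i}{2}\partial\bar\partial\Phi$ written in terms of the radial coordinate on the blow-up of $\C^2$. The invariance forces any orthonormal basis of the weighted Bergman space to be made of monomials (up to the blow-up, sections that are monomials $z^\alpha$ in the affine charts), so the $\varepsilon$-function reduces to a single radial sum
\begin{equation}
\varepsilon_{mg_{EH}}(r)=\sum_{\alpha} \frac{|z^\alpha|^2 e^{-m\Phi(r)}}{\|z^\alpha\|^2},
\end{equation}
where the norms $\|z^\alpha\|^2=\int |z^\alpha|^2 e^{-m\Phi}\,\omega_{EH}^2/2!$ are explicit (or asymptotically explicit) integrals. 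The key reduction is that the balanced condition becomes a single-variable statement: $\varepsilon_{mg_{EH}}(r)$ must be constant in $r$.

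I would then carry out an asymptotic analysis of $\varepsilon_{mg_{EH}}(r)$ at the two ends of the radial variable: near the exceptional divisor (the boundary of the blow-up, small $r$) and at infinity (large $r$, where $g_{EH}$ is asymptotically locally Euclidean, i.e. asymptotic to the flat metric on $\C^2/\Z_2$). The strategy is to show that the leading behaviors at these two ends are incompatible with a single constant value, perhaps by comparing $\varepsilon$ against the curvature/Ricci data through the \tyz-type expansion $\varepsilon_{mg}=m^n + \frac{1}{2}\textrm{scal}\, m^{n-1}+\cdots$; since $g_{EH}$ is Ricci-flat the scalar curvature vanishes, so the constancy of $\varepsilon$ to leading orders is consistent, and the obstruction must come from the finer/global structure. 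Concretely I expect to reduce to showing that the ratio of consecutive norms $\|z^\alpha\|^2$ does not grow in the precise geometric-series fashion that constancy of $\varepsilon$ would demand, and to derive a contradiction from a finite truncation or from the boundary behavior at the exceptional divisor.

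The main obstacle will be making the norm integrals $\|z^\alpha\|^2$ sufficiently explicit to control the sum. The Eguchi-Hanson potential is algebraic but not elementary (it involves logarithms and square roots of the radial variable), so the integrals defining the Bergman norms will not have closed forms; the hard part is to extract just enough asymptotic information about the sequence $\|z^\alpha\|^2$ — either via a Laplace/stationary-phase estimate on the defining integrals or via a recursion coming from the structure of the potential — to conclude that no rescaling $m$ can force $\varepsilon_{mg_{EH}}$ to be constant. A secondary subtlety is the convergence and the correct Hilbert-space setup on the noncompact, non-contractible blow-up, where one must be careful about which sections are genuinely $L^2$ and about the contribution of the exceptional divisor.
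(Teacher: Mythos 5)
Your setup coincides with the paper's: the same weighted Bergman space $\mathcal{H}_m$ of $L^2$-holomorphic sections, the identification of sections with holomorphic functions on $\C^2$ vanishing to order $\geq m$ at the origin, the observation that radial invariance makes the monomials $z_1^jz_2^k$ ($j+k\geq m$) a complete orthogonal system, and the reduction of the balanced condition to a radial identity. Up to that point you are on the paper's track. The gap is in the step where the contradiction is supposed to appear: you propose an asymptotic comparison of $\epsilon_{mg_{EH}}$ at the two ends (near the exceptional divisor and at infinity), possibly via a Tian--Yau--Zelditch-type expansion, but as you yourself note the metric is Ricci-flat and ALE, so the TYZ expansion is consistent with constancy to all the orders it controls; the large-$r$ end therefore yields no obstruction, and you do not say what concrete quantity at the small-$r$ end would be incompatible with it. ``The ratio of consecutive norms does not grow in the precise geometric-series fashion that constancy would demand'' is the right intuition but is not yet an argument.

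The missing idea is elementary and local: write the balanced condition, via Rawnsley's formula, as the exact identity $\sum_{j+k\geq m_0}|z_1^jz_2^k|^2/\|z_1^jz_2^k\|^2_{h_{m_0}}=Ce^{m_0\Phi}$ for a single constant $C$, and Taylor-expand $e^{m_0\Phi}$ at the origin. Matching the coefficients of $|z_1|^{2m_0}$ and of $|z_1|^{2(m_0+2)}$ gives \emph{two} independent formulas for the one constant $C$, each involving an explicit norm integral. Contrary to your worry that these integrals are intractable, the substitution $\rho_1=r\cos\theta$, $\rho_2=r\sin\theta$ splits them into a Beta integral times a one-variable integral that evaluates in closed form in terms of incomplete Gamma functions $\Gamma(a,2m_0)$; the paper then checks (partly by asymptotics of $\Gamma(a,x)$, partly graphically) that the two resulting expressions for $C$ never agree for a positive integer $m_0$. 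So the obstruction does live at the exceptional divisor, as you guessed, but it is extracted by finite-order coefficient matching of the K\"ahler potential, not by an asymptotic expansion of the full $\epsilon$-function; without identifying which two coefficients to compare and computing the corresponding norms explicitly, your plan does not close.
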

The paper consists of two other sections. Section \ref{section2} contains some basic  facts on the  Eguchi-Hanson metric and  in Section \ref{section3}, after a brief introduction to balanced  metrics, we prove  Theorem \ref{Th1}.

\vskip 0.3cm

\noindent 
The author would like to thank Andrea Loi and Michela Zedda for their really useful considerations, comments and remarks concerning various aspects of this work.

\section{The Eguchi-Hanson metric}\label{section2}
Let us first consider the blow-up of $\C^2$ at the origin. Define a subset $\tilde \C^2$ of $\C^2 \times \mathbb{C}P^{1}$ as the space of pairs $(z,\ell)$, where $\ell \in \mathbb{C}P^{1}$ and $z$ is a point on the line corresponding to  $\ell$ in $\C^2$, i.e:
\[
\tilde \C^2 = \{ (z_1, z_2, [t_1, t_2]) \in \C^2 \times \mathbb{C}P^{1} \ : \ t_1z_{2}-t_2z_{1} = 0\}.
\]

The blow-up $\tilde \C^2$ is a closed submanifold of $\C^2 \times \mathbb{C}P^{1}$ of complex dimension $2$. 	
A system of charts for $\tilde{\C}^2$ is given as follows: for  $j =1,2$ we take 
\[
\tilde U_j = (\C^2 \times U_j) \cap \tilde{\C}^2,
\]
where $U_j=\{t_j \neq 0\}$, for $j =1, 2$,  are open subsets of $\mathbb{C}P^{1}$. Then we have two coordinate maps
\begin{equation*}
\begin{split}
\varphi_1 & : \tilde{U}_1 \rightarrow \mathbb{C}^2,\, \left(z_1,z_2,[t_1,t_2]\right) \mapsto \left(z_1,\frac{t_2}{t_1}\right) \\
\varphi_2 & : \tilde{U}_2 \rightarrow \mathbb{C}^2,\, \left(z_1,z_2,[t_1,t_2]\right) \mapsto \left(\frac{t_1}{t_2},z_2\right)
\end{split}
\end{equation*}
having as inverses the parametrization maps defined, respectively, by
\begin{equation}\label{chartsBUmap2}
\begin{split}
\varphi_1^{-1} & : \mathbb{C}^2 \rightarrow \tilde{U}_1,\, (w_1,w_2) \mapsto (w_1,w_1w_2,[1,w_2]) \\
\varphi_2^{-1} & : \mathbb{C}^2 \rightarrow \tilde{U}_2,\, (w_1,w_2) \mapsto (w_1w_2,w_2,[w_1,1]).
\end{split}
\end{equation}

There are two projection maps
\begin{align*}
p_1 & : \tilde \C^2 \rightarrow \C^2 \\
p_2 & : \tilde \C^2 \rightarrow \mathbb{C}P^{1} 
\end{align*}
given by the restriction to $\tilde \C^2$ of the canonical projections of $\C^2 \times \mathbb{C}P^{1}$. One can prove (see \cite{McDuff}) that $p_2$ induces on  $\tilde{\C}^2$ the structure of complex line bundle, whose fibre over $[t_1,t_2] \in \mathbb{C}P^{1}$ is the corresponding line $\{(\lambda t_1, \lambda t_2)\, |\, \lambda \in \C\}$ in $\C^2$. In other words, this is the universal line bundle over $\mathbb{C}P^{1}$. Observe that $p_1$ is bijective when restricted to $p_1^{-1}(\C^2 \setminus \{0\})$, while
\[
p_1^{-1}(0) = \{(z, [t]) \in \tilde \C^2\ | \ z =  0 \} \simeq \mathbb{C}P^{1}.
\]
Thus we may think of $\tilde{\C}^2$ as obtained from $\C^2$ by replacing the origin $0$ by the space of all lines in $\C^2$ through $0$. The manifold $p_1^{-1}(0)$ is called the \textit{exceptional divisor}, and we will denote it by $H$. So, the restriction 
\[
p_r := {p_1}_{|\tilde{\C}^2 \setminus H}: \tilde{\C}^2 \setminus H \rightarrow \C^2 \setminus \{ 0 \}, \ \ (z, [t]) \mapsto z
\]
is a biholomorphism, having as inverse
\[
\C^2 \setminus \{ 0 \} \rightarrow \tilde{\C}^2 \setminus H, \ \ (z_1,z_2) \mapsto \left( z_1,z_2, \left[z_1,z_2\right] \right).
\]

Take now on $\C^2 \setminus \{0\}$ the $(1,1)$-form given by 
\begin{equation}
\omega = \frac{i}{2\pi} \partial \bar \partial(\sqrt{|z|^4+1}+\log|z|^2-\log(1+\sqrt{|z|^4+1})).
\label{WS}
\end{equation}
 We claim that the pull-back $p_r^*(\omega)$ of $\omega$, a priori defined only on $\tilde{\C}^2 \setminus H$, extends in fact to all $\tilde \C^2$.

The pull-back $p_r^*(\omega)$ is given in the coordinates (\ref{chartsBUmap2}) by
\[
p_r^*\omega  = \frac{i}{2\pi}\partial\bar{\partial}\left(\sqrt{|w_1|^4(1+|w_2|^2)^2+1} + \log\left( \frac{1+|\omega_2|^2}{1+\sqrt{|w_1|^4(1+|w_2|^2)^2+1}}\right)\right),
\]
on $\tilde{U}_1 \setminus H$, and
\[
p_r^*\omega  = \frac{i}{2\pi}\partial\bar{\partial}\left(\sqrt{|w_2|^4(1+|w_1|^2)^2+1} + \log\left( \frac{1+|\omega_1|^2}{1+\sqrt{|w_2|^4(1+|w_1|^2)^2+1}}\right)\right),
\]
on  $\tilde{U}_2 \setminus H$. This shows that $p_r^*(\omega)$ extends to the whole $\tilde \C^2$, as claimed.  On $\tilde{\C}^2 \setminus H$, clearly, this form is given in local coordinates by \eqref{WS}. The metric associated to  \eqref{WS} is known in literature as the Eguchi-Hanson metric and denoted here by $g_{EH}$. The form \eqref{WS} is denoted here by $\omega_{EH}$. It is not hard to see  that $g_{EH}$ is a complete Ricci-flat \K metric (cf. \cite{EH} and \cite{LSZ}).
\section{Balanced metrics and the  proof of Theorem \ref{Th1}}\label{section3}
Let $g$ be a \K metric on an $n$-dimensional complex manifold $M$. Assume that
there exists a holomorphic line bundle $L$ over $M$ such that $c_1(L) = [\omega]_{dR}$, where $\omega$ is the K\"ahler form associated to $g$, $c_1(L)$ denotes the first Chern class of $L$ and $[\omega]_{dR}$ is the second De-Rham cohomology class of $\omega$.  A necessary and sufficient condition for the existence of such an  $L$ is that $\omega$ is an integral \K form.

Let $m\geq 1$ be a positive integer and let $h_m$ be an Hermitian metric on $L^m$, where $L^m$ is the $m$-tensor power of $L$, such that its Ricci curvature $\Ric(h_m) = m\omega$\footnote{$\Ric(h_m)$ is the two-form on $M$ whose local expression is given by
	\[
	\Ric(h_m)= -\frac{i}{2\pi}\partial\bar{\partial}\log h_m(\sigma(x),\sigma(x))
	\]
	for a trivializing holomorphic section $\sigma : U \rightarrow L^m\setminus \{0\}$.}.  In the quantum mechanics terminology $L^m$ is called the \textit{prequantum line bundle}, the pair $(L^m, h_m)$ is called a \textit{geometric quantization} of the K\"ahler manifold $(M,m\omega)$ and $\hbar= m^{-1}$ plays the role of Planck's constant
(see e.g. \cite{AreLoi}). 

Consider the separable complex Hilbert space $\Hm$ consisting of global
holomorphic sections $s$ of $L^m$ which are bounded with respect to
\[
\left\langle s,s \right\rangle_{h_m}=||s||^2_{h_m}= \int_{M} h_m(s(x),s(x))\frac{\omega^n}{n!}.
\]
Let $s_j,\, j=0,\ldots, d_m (\dim \Hm = d_m+1\leq \infty)$ be an orthonormal basis of $(\Hm,\left\langle \cdot, \cdot \right\rangle_{h_m})$ and consider the function $\epsilon_{mg}$ on $M$ given by:
\[
\epsilon_{mg}(x)= \sum_{j=0}^{d_m}h_m(s_j(x),s_j(x)).
\]
The notation emphasizes that the function $\epsilon_{mg}$ depends only on the \K metric $mg$ and not on the orthonormal basis chosen and the Hermitian metric $h_m$. Clearly if $M$ is compact $\Hm=H^0(L^m)$, where $H^0(L^m)$ is the (finite dimensional) space of global holomorphic sections of $L^m$. 

\noindent The function $\epsilon_{mg}$ has appeared in literature under different names. The first one was  $\eta$\textit{-function} of Rawnsley in \cite{Rawnsley} later renamed as $\theta$\textit{-function} in \cite{CahGut}. 

It is well known (see \cite{CahGut} and \cite{Rawnsley}) that if the function $\epsilon_{mg}$ is a constant different from zero, for a suitable $m$, then $m g$ is projectively induced via the coherent states map 
\[
\varphi_m : M \rightarrow \C P^{d_m},\, x \mapsto [s_0(x),\ldots, s_j(x),\ldots].
\]
In fact the relation between this map and the function $\epsilon_{mg}$ can be read in the following formula due to Rawnsley (see \cite{Rawnsley}):
\begin{equation}
\varphi_m^*(\omega_{FS}) = \frac{i}{2\pi} \partial \bar{\partial} \log \sum _{j=0}^{d_m} |s_j(x)|^2 = m \omega+ \frac{i}{2\pi} \partial \bar{\partial} \log \epsilon_{mg}.  
\label{E10}
\end{equation}
Therefore, $\epsilon_{mg}$ measures the obstruction for the \K form $m \omega$ to be projectively induced via the coherent states map $\varphi_m$. 

A metric $g$ on $M$ is called {\em balanced}  if  $\epsilon_g$ is a positive constant.
The definition of balanced metrics was originally given by Donaldson \cite{Donaldson} in the case of compact polarized \K manifolds $(M,g)$ and generalized in \cite{AreLoi3} to the non compact case (see also \cite{CucLoi}, \cite{Englis2}, \cite{GreLoi}, \cite{LM12}, \cite{LZ12}). Notice $g$ is balanced does not imply $mg$ is balanced. \\

In order to prove Theorem \ref{Th1} consider the holomorphic line bundle $L \rightarrow (\tilde{\C}^2,\omega_{EH})$ such that $c_1(L)=[\omega_{EH}]_{dR}$. Such a  line bundle exists since $\omega_{EH}$ is integral. Moreover, $L$ is unique, up to isomorphisms of line bundle, since $\tilde{\C}^2$ is simply-connected.  It is straightforward to verify that the holomorphic line bundle $L^m  \rightarrow \tilde{\C}^2$, equipped with the hermitian structure 
\[
h_m(\sigma(x),\sigma(x))= e^{-m\sqrt{|z|^4+1}}\left(\frac{1+\sqrt{|z|^4+1}}{|z|^2} \right)^m |q|^2,
\]
defines a geometric quantization of $(\tilde{\C}^2,m\omega_{EH})$, where $m$ is a positive natural number and $\sigma: U \subset \tilde \C^2\setminus H \rightarrow L^m \setminus \{0\},\, x \mapsto (z,q) \in \C^2\setminus \{0\} \times \C$ is a trivialising holomorphic section.
Moreover, since $L^m_{|\C^2\setminus \{0\}}$ is equivalent to the trivial bundle $\C^2\setminus \{0\} \times \C$, one can find  a natural bijection between the space $H^0(L^m)$ of global holomorphic sections of $L^m$  and the space of  holomorphic functions on $\C^2$ vanishing at the origin with order greater or equal than $m$ 
(see, e.g.  \cite{GH}, Chapter 1). This bijection takes $s\in H^0(L^m)$ to the holomorphic function $f_s$ on $\C^2$ obtained by restricting $s$ to $\tilde{\C}^2 \setminus H \simeq \C^2 \setminus \{0\}$. Moreover, since $H$ has zero measure in $\tilde{\C}^2$, one gets 
\begin{equation}\label{intnorm}
\begin{split}
\left\langle s,s \right\rangle_{h_m} & = \int_{\tilde{\C}^2} h_m(s(x),s(x))\frac{\omega_{EH}^2}{2!}= \\
 & = \int_{\C^2 \setminus \{0\}} e^{-m\sqrt{|z|^4+1}}\left(\frac{1+\sqrt{|z|^4+1}}{|z|^2} \right)^m  |f_s(z)|^2d\mu (z)<\infty,
\end{split}
\end{equation}
where $d\mu (z)= \left(\frac{i}{2\pi} \right)^2 dz_1 \wedge d\bar{z}_1\wedge dz_2 \wedge d\bar{z}_2$, I would recall that the metric $\omega_{EH}$ is Ricci-flat. 
Therefore, in this case, the inclusion $\mathcal{H}_m\subseteq H^0(L^m)$ is indeed an equality, namely $\mathcal{H}_m=H^0(L^m)$. 
\newline

We are now ready to prove Theorem \ref{Th1}.
\begin{proof}[Proof of Theorem\ref{Th1}]
By passing to polar coordinates  $z_1=\rho_1 e^{i\vartheta_1}, z_2=\rho_2 e^{i\vartheta_2}$ with $\rho_1,\rho_2\in (0,+\infty), \vartheta_1,\vartheta_2 \in (0,2\pi)$  one  easily sees that the monomials $\{z_1^{j}z_2^{k}\}_{j+k \geq m}$ is a complete  orthogonal system for the Hilbert space $(\Hm,\left\langle \cdot, \cdot \right\rangle_{h_m})$.

Moreover, by (\ref{intnorm}),  
	\[
	||z_1^jz_2^k||^2_{h_m}   =4\int_0^{+\infty}\int_0^{+\infty} e^{-m\sqrt{(\rho_1^2+\rho_2^2)^2+1}}\left(\frac{1+\sqrt{(\rho_1^2+\rho_2^2)^2+1}}{\rho_1^2+\rho_2^2} \right)^m \rho_1^{2j+1}\rho_2^{2k+1}d\rho_1d\rho_2. 	
	\]
	With the substitution $\rho_1=r\cos\theta$, $\rho_2=r\sin\theta$, $0<r<+\infty$, $0<\theta <\frac{\pi}{2}$ one finds a product of one variable integrals
	\[
	||z_1^jz_2^k||^2_{h_m} = 4 \int_0^{\frac{\pi}{2}}(\cos\theta )^{2j+1}(\sin\theta)^{2k+1}d\theta\cdot
	\int_0^{+\infty} e^{-m\sqrt{r^4+1}}(1+\sqrt{r^4+1})^mr^{2(j+k-m+1)+1}\,dr.
	\]
	For the first integral we have (see \cite{AbrSteg} 6.1.1, page 255)
	\[
	\int_0^{\frac{\pi}{2}}(\cos\theta )^{2j+1}(\sin\theta)^{2k+1}d\theta=\frac{\Gamma (j+1)\Gamma (k+1)}{2\Gamma (j+k+2)}=
	\frac{j!k!}{2(j+k+1)!}
	\]
	Hence one gets  
	\begin{equation}
	||z_1^jz_2^k||^2_{h_m} = \frac{2j!k!}{(j+k+1)!}
	\int_0^{+\infty} e^{-m\sqrt{r^4+1}}(1+\sqrt{r^4+1})^mr^{2(j+k-m+1)+1}\,dr.
    \label{E0}
	\end{equation}

	Suppose that there exists a positive integer $m_0$ such that $m_0 \omega_{EH}$ is balanced. Therefore by \eqref{E10}, we have 
	\[
	\frac{i}{2\pi} \partial \bar{\partial} \log \sum_{j+k \geq m_0} \left| \frac{z_1^jz_2^k}{||z_1^jz_2^k||_{h_{m_0}}}\right|^2 =m_0 \omega_{EH} =  m_0\frac{i}{2\pi} \partial \bar{\partial} \Phi,
	\]
	where $\Phi$ is the \K potential associated to $\omega_{EH}$. Then there exists a holomorphic function $f$ on $\C^2$ such that 
	\[
	\log\left(\sum_{j+k \geq m_0} \left| \frac{z_1^jz_2^k}{||z_1^jz_2^k||_{h_{m_0}}}\right|^2 e^{-m_0 \Phi}\right) = \mathfrak{R}(f),
	\]
	where $\mathfrak{R}(f)$ denotes the real part of $f$. By radiality, $f$ is forced to be a constant, and so
	\begin{equation}
	\sum_{j+k \geq m_0} \left| \frac{z_1^jz_2^k}{||z_1^jz_2^k||_{h_{m_0}}}\right|^2 = C e^{m_0 \Phi}, 
	\label{E1}
	\end{equation}
	where $C$ is a real positive constant. A straightforward calculation shows that the series expansion of $e^{m_0 \Phi}$ at $(z_1,z_2)=(0,0)$ is given by
	\begin{equation}
	\left(\frac{e}{2}\right)^{m_0}\sum_{s=0}^{m_0}\binom{m_0}{s}|z_1|^{2(m_0-s)}|z_2|^{2s} + \frac{m_0}{4}\left(-\frac{e}{2}\right)^{m_0} \sum_{s=0}^{m_0+2}\binom{m_0+2}{s}|z_1|^{2(m_0+2-s)}|z_2|^{2s}+ o(|z|^6).
	\label{E2}
	\end{equation}
	From \eqref{E1} and \eqref{E2}, we find
	\begin{equation}
	\frac{|z_1|^{2m_0}}{||z_1^{m_0}||^2_{h_{m_0}}} = C  \left(\frac{e}{2}\right)^{m_0} m_0|z_1|^{2m_0}
	\label{E3}
	\end{equation}
	for $(j,k)=(m_0,0)$, and 
	\begin{equation}
	\frac{|z_1|^{2(m_0+2)}}{||z_1^{m_0+2}||^2_{h_{m_0}}} = C \frac{m_0}{4}\left(-\frac{e}{2}\right)^{m_0}(m_0+2)|z_1|^{2(m_0+2)}
	\label{E4}
   \end{equation}
	for $(j,k)=(m_0+2,0)$. So if $m_0$ is odd, \eqref{E4} yields a contradiction. When $m_0$  is even, by comparing  \eqref{E3}-\eqref{E4}, we must have 
	\begin{equation}
	C =  \left(\frac{2}{e}\right)^{m_0}\frac{1}{m_0||z_1^{m_0}||^2_{h_{m_0}}} = \left(\frac{2}{e}\right)^{m_0}\frac{4}{m_0(m_0+2)||z_1^{m_0+2}||^2_{h_{m_0}}}.
	\label{E7}
	\end{equation}
	From \eqref{E0}, by integrating, we find
	\begin{equation}
	||z_1^{m_0}||^2_{h_{m_0}}= \frac{1}{m_0^2(m_0+1)}\left(\frac{e}{m_0}\right)^{m_0}\left(\Gamma(m_0+2,2m_0)-m_0\Gamma(m_0+1,2m_0)\right),  
	\label{E5}
	\end{equation}
	and
	\begin{equation}
	\begin{split}
    ||z_1^{m_0+2}||^2_{h_{m_0}}= \frac{1}{m_0^4(m_0+3)}\left(\frac{e}{m_0}\right)^{m_0} \left(\right. \Gamma(m_0+4,2m_0) & -3m_0\Gamma(m_0+3,2m_0) \\ 
    & + 2m_0^2 \Gamma(m_0+2,2m_0)\left.\right) ,  
	\end{split}
	\label{E6}
	\end{equation}
	where $\Gamma (a,b) = \int_{b}^{\infty} t^{a-1}e^{-t}dt$ is the incomplete Gamma function. By substituting \eqref{E5} in \eqref{E7}, we find
	\begin{equation}
	C= \left( \frac{2 m_0}{e^2}\right)^{m_0} \frac{m_0(m_0+1)}{\Gamma(m_0+2,2m_0)-m_0\Gamma(m_0+1,2m_0)}
	\label{E8}
	\end{equation}
	and by substituting \eqref{E6} in \eqref{E7}, one gets
	\begin{equation}
	C=\left( \frac{2 m_0}{e^2}\right)^{m_0}\frac{4m_0^3(m_0+3)}{(m_0+2)(\Gamma(m_0+4,2m_0) -3m_0\Gamma(m_0+3,2m_0) + 2m_0^2 \Gamma(m_0+2,2m_0))}.
	\label{E9}
	\end{equation}
	Consider now the real function
	\begin{equation}
	\begin{split}
	f(x)= &  \frac{x(x+1)}{\Gamma(x+2,2x)-x\Gamma(x+1,2x)} + \\
	        & - \frac{4x^3(x+3)}{(x+2)(\Gamma(x+4,2x) -3x\Gamma(x+3,2x) + 2x^2 \Gamma(x+2,2x))}
	\end{split}
	\label{E11}
	\end{equation}
	for $x \in [0,+\infty)$. The graph of the  function $y=f(x)$ is given in  Figure \ref{fig:plot} below (the fact that $\lim_{x \to \infty} f(x) = 0$ follows by the asymptotic series representation for the incomplete Gamma function given in \cite{Amore}). So, the value of  the costant $C$ in \eqref{E8} is equal to the value in \eqref{E9} if and only if $m_0=0$  in contrast with the positivity of $m_0$, yielding the desired contradiction. The proof of the theorem is complete.
	\begin{figure}[h]
		\centering
		\includegraphics[width=0.7\linewidth]{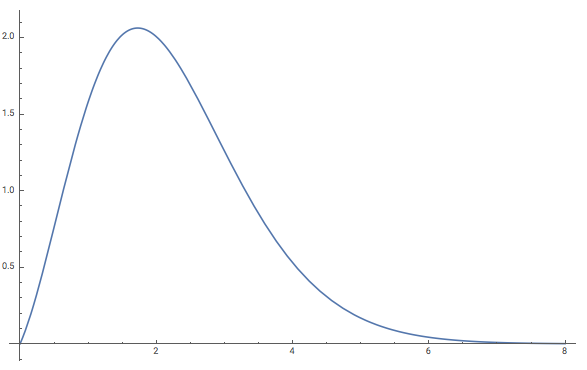}
		\caption{$y=f(x)$}
		\label{fig:plot}
	\end{figure}
	\end{proof}

\end{document}